\numberwithin{equation}{section}
\theoremstyle{plain}
\newtheorem{theorem}{Theorem}[section]
\newtheorem{corollary}[theorem]{Corollary}
\newtheorem{proposition}[theorem]{Proposition}
\newtheorem{lemma}[theorem]{Lemma}
\theoremstyle{remark}
\newtheorem{remark}[theorem]{Remark}
\theoremstyle{definition}
\newcommand{\HH}{\mathcal{H}}
\newcommand{\CC}{\mathcal{C}}
\newcommand{\R}{\mathbb{R}}
\newcommand{\N}{\mathbb{N}}
\newcommand{\roo}{\varrho}
\renewcommand{\epsilon}{\varepsilon}
\newcommand{\eps}{\varepsilon}
\newcommand{\e}{\varepsilon}
\DeclareMathOperator{\dir}{dir}
\DeclareMathOperator{\dimb}{dim_B}
\DeclareMathOperator{\udimb}{\overline{dim}_B}
\DeclareMathOperator{\dimh}{dim_H}
\DeclareMathOperator{\dimp}{dim_P}
\DeclareMathOperator{\dist}{dist}
\DeclareMathOperator{\diam}{diam}
\begin{document}

\title{H\"older coverings of sets of small dimension}

\author{Eino Rossi}
\author{Pablo Shmerkin}
\address{
        Department of Mathematics and Statistics, Torcuato di Tella University \\
        Av. Figueroa Alcorta 7350 (C1428BCW) \\
        Buenos Aires \\
        Argentina}
\email{eino.rossi@gmail.com}
\email{pshmerkin@utdt.edu}
\thanks{ER acknowledges the supports of CONICET and the Finnish Academy of Science and Letters}
\thanks{PS was supported by PICT 2013-1393 and PICT 2014-1480 (ANPCyT)}
\subjclass[2010]{28A12, 28A75, 28A80}
\keywords{H\"older graph, box dimension, thin sets}
\date{\today}

\begin{abstract}
 We show that a set of small box counting dimension can be covered by a H\"older graph from all but a small set of directions,  and give sharp bounds for the dimension of the exceptional set, improving a result of B. Hunt and V. Kaloshin. We observe that, as a consequence, H\"older graphs can have positive doubling measure, answering a question of T. Ojala and T. Rajala. We also give remarks on H\"older coverings in polar coordinates and, on the other hand, prove that a Homogenous set of small box counting dimension can be covered by a Lipschitz graph from all but a small set of directions.
\end{abstract}

\maketitle
\section{Introduction}

Given a set $A\subset\R^d$, how often does the orthogonal projection to a $k$-plane $V$ have a H\"{o}lder inverse? Of course, in order to have an inverse at all, the projection has to be injective. Let $\udimb$ denote upper box dimension. It follows from elementary dimension inequalities that if $\udimb(A)<(d-1)/2$, then for almost all $v\in S^{d-1}$, the orthogonal projection $P_v:\R^d\to \langle v\rangle^\perp$ is indeed injective. In \cite{HuntKaloshin1999}, Hunt and Kaloshin proved that, in this case, for almost all $v\in S^{d-1}$, the set $A$ can be covered by the graph of a H\"{o}lder function $f_v:\langle v\rangle^\perp\to \langle v\rangle$. More generally, they showed that if $\udimb(A)<(d-k)/2$, then for almost all linear maps $L$ from $\R^d\to\R^{d-k}$, the restriction $L|_A$ has a H\"{o}lder inverse. Hunt and Kaloshin also obtain results for subsets $A$ of infinite-dimensional spaces, where ``almost every'' is understood in the sense of prevalence. See \cite{HuntKaloshin1999} for further details.

In this article, we refine Hunt and Kaloshin's result by providing a bound on the dimension of exceptional directions $v$, and likewise for $k$-planes $V$ for any $k$; see Theorem \ref{thm:holder}. Moreover, we show that when $k=1$ this bound is sharp in a rather strong way. While Hunt and Kaloshin state their results in terms of almost all linear maps, our approach is to work with the Grassmannian of $k$ planes in $\R^d$, which is perhaps a more natural parametrization, since many linear maps correspond geometrically to the same projection. We also establish analogous results for spherical projections, and observe that for a special class of sets (homogeneous sets) one can get covers by \emph{Lipschitz} graphs, while this is known to be false in general.

One of the original motivations for this work was a question posed by T. Rajala and T. Ojala in \cite{OjalaRajala2014}: does every doubling measure on $\R^2$ give zero mass to a H\"{o}lder graph? It turns out that a strong negative answer follows by combining the aforementioned result of Hunt and Kaloshin with some known constructions of sets of small box dimension which are charged by a doubling measure. Indeed, we observe that there are doubling self-similar measures that charge graphs of H\"{o}lder functions of exponent arbitrarily close to $1$. We were initially unaware of Hunt and Kaloshin's result and an earlier version of this article contained an independent derivation; we thank M. Hochman for bringing the work \cite{HuntKaloshin1999} to our attention.

\section{Covers of small sets by H\"{o}lder graphs}

\subsection{A bound on the dimension of exceptional planes}

Let us first fix some notation and definitions. Let $\dimh,\dimp$ and $\udimb$ denote the Hausdorff, packing and upper box counting (or Minkowski) dimensions respectively. For the definitions and main properties, see  for example \cite{Mattila1995}.

We let $G(d,k)$ be the Grassmannian of $k$-planes in $\R^d$. This is a compact smooth manifold of dimension $k(d-k)$. A natural metric compatible with the topology of $G(d,k)$ is given by $\roo(V,W)=\|P_V-P_W\|$, where $P_{\cdot}$ denotes parallel projection (i.e. orthogonal projection to the orthogonal complement of the plane), and $\|\cdot\|$ stands for the operator norm. The Hausdorff and box counting dimensions of $G(d,k)$ in this metric are again $k(d-k)$. We note that the orthogonal group $O(d)$ acts transitively on $G(d,k)$, and that the metric $\roo$ is invariant under this action. Moreover, $G(d,k)$ carries a unique Borel probability measure $\gamma_{d,k}$ invariant under this action. The Grassmannian $G(d,1)$ can be naturally identified with the $(d-1)$ dimensional projective space.  For further details about $G(d,k)$ and $\gamma_{d,k}$, the reader is referred to  \cite[Section 3]{Mattila1995}.

For $w\ge 0$, let $\HH_\infty^w$ denote the $w$ dimensional Hausdorff content on the Grassmanian $G(d,k)$ (with respect to the metric $\roo$ defined above). That is, for $E\subset G(d,k)$
\[
 \HH_\infty^w(E) = \inf\left\{ \sum_{i\in\N} \diam(U_i)^w : E \subset \bigcup_{i\in\N} U_i \right\}.
\]
Hausdorff content is an outer measure, but it is in general highly non-additive even on Borel sets. One exception is the value $w=k(d-k)$; in this case $\gamma_{d,k} = c\, \HH_\infty^{k(d-k)}$, for some constant $c$ depending on $d$ and $k$. We recall that $\dimh(E) = \inf \{ w>0: \HH_\infty^w(E)=0\}$.

We can now state our main theorem:
\begin{theorem}
\label{thm:holder}
 Let $A\subset\mathbb{R}^d$ be a bounded set such that $\udimb(A)<t< (d-k)/2$ and let $(k-1)(d-k) + 2t < w < k(d-k) $. Then the set of planes $V\in G(d,k)$, for which the set $A$ is not contained in the graph of a H\"older function $f_V\colon V^\perp\to V$ of exponent $1-\frac{2t}{w-(k-1)(d-k)}$, has Hausdorff dimension at most $w$.

 In particular, the set of $V\in G(d,k)$ such that $A$ is not contained in the graph of a H\"older function $f_V\colon V^\perp\to V$ (without taking exponents into account) has Hausdorff dimension at most $(k-1)(d-k) + 2\udimb(A)$.
\end{theorem}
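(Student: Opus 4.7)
The plan is to reduce the graph-covering condition to a quantitative transversality statement on the Grassmannian. Using the orthogonal decomposition $|x-y|^2 = |P_V(x-y)|^2 + |P_{V^\perp}(x-y)|^2$ together with the graph equation $f_V(P_{V^\perp}(x)) = P_V(x)$, one sees that $A$ is contained in the graph of an $\alpha$-H\"older $f_V\colon V^\perp \to V$ if and only if there exists $C > 0$ with
\[
 |P_{V^\perp}(x-y)| \ge C^{-1}|x-y|^{1/\alpha} \qquad \text{for all distinct } x,y \in A.
\]
Setting $W := w - (k-1)(d-k)$ and $\alpha := 1 - 2t/W$, the exceptional set $E_\alpha$ is therefore contained in $B(C) := \{V : \exists\, x\neq y \in A,\ |P_{V^\perp}(x-y)| < C^{-1}|x-y|^{1/\alpha}\}$ for every $C>0$, and it suffices to show $\HH^{w'}_\infty(B(C)) \to 0$ as $C \to \infty$ for every $w' > w$.

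The main geometric ingredient is a tube estimate on $G(d,k)$: for every unit vector $u \in \R^d$ and every $\theta \in (0,1)$,
\[
 \HH^{w'}_\infty\bigl\{V \in G(d,k) : |P_{V^\perp}(u)| \le \theta\bigr\} \lesssim \theta^{W'}, \qquad W' := w' - (k-1)(d-k).
\]
Indeed, $\{V : u \in V\}$ is a smooth submanifold of $G(d,k)$ canonically identified with $G(u^\perp, k-1) \cong G(d-1, k-1)$, of dimension $(k-1)(d-k)$; its $\theta$-neighbourhood in $\roo$ can be covered by $\lesssim \theta^{-(k-1)(d-k)}$ balls of radius $\theta$, each contributing at most $\theta^{w'}$ to the Hausdorff content.

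With these tools, I would run a scale-by-scale cover of $B(C)$. For each $n \ge 1$, let $B_n(C)$ collect those $V$ for which the H\"older inequality fails for some pair $x,y \in A$ with $|x-y| \in [2^{-n-1}, 2^{-n}]$. Set $r := 2^{-n}$ and $s := C^{-1}r^{1/\alpha}$, and cover $A$ by $N(A, s) \lesssim s^{-t}$ balls of radius $s$. A pair of balls $(B_i, B_j)$ can contribute to $B_n(C)$ only when $|P_{V^\perp}(u_{ij})| \lesssim s/r$ for the direction $u_{ij}$ between the centres of $B_i$ and $B_j$; the tube estimate then gives a contribution of Hausdorff content $\lesssim (s/r)^{W'}$. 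With at most $N(A,s)^2 \lesssim s^{-2t}$ pairs, we obtain
\[
 \HH^{w'}_\infty(B_n(C)) \lesssim s^{-2t}(s/r)^{W'} = C^{-(W'-2t)}\, r^{\gamma}, \qquad \gamma := \frac{2t(W'-W)}{W-2t}.
\]
Since $w' > w$ (hence $W' > W$) and $W > 2t$ by hypothesis, we have $\gamma > 0$; summing the geometric series in $n$ and then sending $C \to \infty$ gives $\HH^{w'}_\infty(E_\alpha) = 0$ for every $w' > w$, so $\dimh E_\alpha \le w$.

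The main technical obstacle is aligning the pair-counting bound $N(A,s)^2 \lesssim s^{-2t}$ with the tube estimate so as to produce the sharp exponent $(k-1)(d-k)+2t$ from the statement. This forces a delicate coupling between the H\"older tolerance $C^{-1}r^{1/\alpha}$ and the \emph{direction uncertainty} $s/r$ arising from the ball radius, which in turn pins the H\"older exponent to $\alpha = 1 - 2t/W$. Beyond this balancing, the proof amounts to bookkeeping of exponents.
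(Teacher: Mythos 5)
Your argument is correct and follows essentially the same route as the paper: you cover $A$ at a ball scale coupled to the pair separation through the H\"older exponent, bound the $w'$-dimensional content of the set of $k$-planes nearly containing a fixed direction by $\theta^{w'-(k-1)(d-k)}$ via the identification with a $\theta$-neighbourhood of (a copy of) $G(d-1,k-1)$, and sum a geometric series over scales. The differences are organizational rather than substantive --- you phrase the graph condition as the quantitative inequality $|P_{V^\perp}(x-y)|\ge C^{-1}|x-y|^{1/\alpha}$ and intersect over $C$, where the paper runs a Borel--Cantelli argument over the bad sets $M_n$ and then verifies injectivity and the H\"older property directly --- and the remaining loose ends (the finitely many scales with $|x-y|$ comparable to $\diam(A)$, and the classical H\"older extension from $P_{V^\perp}(A)$ to all of $V^\perp$) are routine.
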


In the proof we will use the following result from elementary geometry. We use the notation $x=O(r)$ to mean $0\le x\le Cr$, where the constant $C$ may depend only on the ambient dimension and $w$, the exponent of the Hausdorff content in question.
\begin{lemma} \label{lem:anglegeneral}
Let $B, B'$ be two balls in $\R^d$ of radius $r$ that are at distance $R\ge r$ apart, and let $\ell\in G(d,1)$ be the direction determined by their centres. Then any direction determined by points $x\in B$ and $x'\in B'$ makes an angle at most $O(r/R)$ with $\ell$.
\end{lemma}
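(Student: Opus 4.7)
The plan is a direct coordinate-free angle estimate. Let $c$ and $c'$ be the centres of $B$ and $B'$, and set $v := c' - c$, a vector along $\ell$. The hypothesis that the balls are at distance $R \ge r$ apart implies that $|v|$ is comparable to $R$ (between $R$ and $R+2r \le 3R$, depending on whether ``distance apart'' is read as gap or centre-to-centre), and in particular $|v| \ge R$. For arbitrary $x \in B$ and $x' \in B'$, I would decompose
\[
 x' - x = v + u, \qquad u := (x' - c') - (x - c),
\]
and observe that $|u| \le 2r$ since $|x-c|,|x'-c'|\le r$.

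Next, I would estimate the angle $\theta$ between $x'-x$ and $v$. Writing $\sin\theta$ as $|v\times(v+u)|/(|v|\,|v+u|)$ (where $|\cdot\times\cdot|$ denotes the area of the spanned parallelogram, equivalently $\sqrt{|v|^2|v+u|^2-\langle v,v+u\rangle^2}$), the identity $v\times v=0$ collapses this to
\[
 \sin\theta \;=\; \frac{|v\times u|}{|v|\,|v+u|} \;\le\; \frac{|u|}{|v+u|}.
\]
The denominator is bounded below by $|v|-|u| \ge R - 2r$, and using $R \ge r$ (or more comfortably that $|v|$ itself already exceeds $R$ once the gap interpretation is taken) gives $|v+u| \gtrsim R$. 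Therefore $\sin\theta \le C r/R$, and since $\theta \le 2\sin\theta$ for $\sin\theta \le 1/2$, one concludes $\theta = O(r/R)$.

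The only mildly delicate point is book-keeping constants in the regime where $R$ and $r$ are of the same order: there the claimed bound $O(r/R) = O(1)$ is vacuous, but one must still be sure the denominator $|v+u|$ stays bounded away from zero so that the inequality $\sin\theta \le |u|/|v+u|$ produces a finite estimate. This is exactly what the assumption $R \ge r$ (in the gap interpretation it actually yields $|v| \ge 3r$, hence $|v+u| \ge r$) guarantees. No real obstacle is expected; the lemma is purely elementary Euclidean geometry and the displayed inequality above is the entire content of the argument.
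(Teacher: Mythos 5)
Your argument is correct and complete. Note that the paper itself offers no proof of this lemma at all --- it is stated as ``the following result from elementary geometry'' and used directly in the proof of Theorem \ref{thm:holder} --- so your write-up simply supplies the standard argument the authors took for granted: the decomposition $x'-x=v+u$ with $|u|\le 2r$, the bound $\sin\theta\le |u|/|v+u|$, and the lower bound on $|v+u|$ coming from $R\ge r$. Your attention to the gap-versus-centre reading of ``distance $R$ apart'' and to the comparability of $\theta$ and $\sin\theta$ (trivial anyway once $r/R$ is bounded below) covers the only points where care is needed.
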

\begin{proof}[Proof of Theorem \ref{thm:holder}]
 For simplicity, denote $(k-1)(d-k)=g$, and note that this is the dimension of $G(d-1,k-1)$. By assumption, there are $s<t$, a constant $C>0$ and families $\mathcal{B}_n$ of balls of radius $2^{-n}$, such that $|\mathcal{B}_n|\le C\, 2^{sn}$ and $A$ is covered by the union of $\mathcal{B}_n$, for each $n\in\N$.
 Let
 \[
  \mathcal{C}_n = \left\{ (2B,2B'): (B,B')\in\mathcal{B}_n^2, \,\dist(B,B')\ge 2\cdot 2^{-(1-\frac{2t}{w-g}) n} \right\}.
 \]
 Here $2B$ denotes the ball of the same center as $B$ and twice the radius. Given $\ell\in G(d,1)$ and $V\in G(d,k)$, let $\angle(\ell,V)$ denote the respective angle, i.e. the infimum of the angles between non-zero vectors in $\ell$ and $V$.  Let us define
 \[
  H_{B,B'}(\delta) = \{ V\in G(d,k) : \angle(\ell(B,B'),V)\leq \delta \},
 \]
 where $\ell(B,B')\in G(d,1)$ has the direction determined by the centers of the balls $B$ and $B'$. By Lemma \ref{lem:anglegeneral}, the set of $k$-planes which contain a direction determined by two points in $B, B'$, $(B,B')\in\CC_n$  is then contained in
 \begin{equation}
  \label{eq:lastobservation}
  H_{B,B'} \left( O(1)\frac{ 2^{-n} }{  2^{-n(1-\frac{2t}{w-g}) } } \right)
  = H_{B,B'} \left( O(1) 2^{-n(\frac{2t}{w-g})} \right).
 \end{equation}

 We need to estimate the $w$-dimensional Hausdorff content of the set $H_{B,B'}(\delta)$ for small $\delta$. The idea is that if $V\in H_{B,B'}(\delta)$, then  $V$ contains a vector that makes a small angle with $\ell(B,B')$, which we can complete to an orthonormal basis of $V$; the degrees of freedom then equal the dimension of $G(d-1,k-1)$. To make this idea precise, let $B(V_i,\delta)$ be a cover of $G(d-1,k-1)$ with cardinality $O(1) \delta^{-g}$. Let $\varphi$ be any orthogonal map from $\R^{d-1}$ to $\ell(B,B')^\perp\subset\R^d$, and given $V\in G(d-1,k-1)$ let $\hat{V}\in G(d,k)$ be the plane spanned by $\varphi(V)$ and $\ell(B,B')$. Note that $V\mapsto \hat{V}$ is an isometry onto its image. Furthermore, if $W\in H_{B,B'}(\delta)$, then there are $V\in G(d-1,k-1)$ and an orthogonal map $\psi\in O(d)$ with $\|\psi-I\|=O(\delta)$ such that $W=\psi\hat{V}$. We deduce that
 \[
  H_{B,B'}(\delta) \subset \bigcup_{i} B(\hat V_i,O(\delta)),
 \]
 and therefore
 \begin{equation} \label{eq:contentbound}
  \HH_\infty^w (H_{B,B'}(\delta)) \leq O(1) \delta^{-g} \delta^w
 \end{equation}
 directly from the definition of $\HH_\infty^w$. Let $M_n\subset G(d,k)$ be the planes which contain a direction determined by two points in $B, B'$ for some $(B,B')\in\CC_n$. Using equations \eqref{eq:lastobservation} and \eqref{eq:contentbound}, and the obvious bound $|\mathcal{C}_n |\le |\mathcal{B}_n|^2 \le C^2 2^{2sn}$, we estimate
 \begin{align*}
  \HH_\infty^w( M_n )
  &\leq \sum_{(B,B')\in\CC_n} \HH_\infty^w\left( H_{B,B'}\left( O(1) 2^{-n2t/(w-g)}\right) \right) \\
  &\leq O(C^2) 2^{n2s} 2^{-n2t}
   = O(C^2) 2^{2n (s-t)}
 \end{align*}
 Since $s<t$, we have that $\sum_{n\in\N} \HH_\infty^w\left( M_n \right) < \infty$. Hence the set of $k$-planes which are in infinitely many $M_n$ has $\HH_\infty^w$ measure zero, and hence Hausdorff dimension at most $w$. 

 It remains to show that if $V$ is only in finitely many of the $M_n$, then $A$ can be covered by the graph of a suitable function $f:V^\perp \to V$. Fix, then, some large $n_0\in\N$ and $V\in G(d,k) \setminus \cup_{n=n_0}^\infty M_n$, and write $P$ for the orthogonal projection to $V^{\perp}$. Now let $x,x'\in A$ and suppose
 \[
  |x-x'| \ge 3\cdot 2^{- (1-\frac{2t}{w-g}) n}
 \]
 for some $n\ge n_0$. Then $x\in B, x'\in B'$ for some $B,B'\in\mathcal{B}_n$, and the projections $P(2B), P(2B')$ are disjoint (otherwise, there would be points $y,y'\in 2B, 2B'$ determining a direction contained in the $k$-plane $V$, contradicting that $V\notin M_n$). This implies that
 \[
  |P(x)-P(x')|\ge 2\cdot 2^{-n},
 \]
 thus in particular $P$ is injective on $A$, and therefore $A$ is the graph of a function $f:P(A)\to V$. To show that $f$ is H\"older, let $P(x),P(x')\in P(A)$ so that $2^{-n} \leq |P(x)-P(x')| < 2\cdot 2^{-n}$ for some $n\ge n_0$. By the above observation,
 \begin{align*}
  |f(P(x))-f(P(x'))|&=  |x-x'| \le 3\cdot 2^{- (1-\frac{2t}{w-g}) n} \\
  &= 3 (2^{-n})^{1-\frac{2t}{w-g}}\\
  & \leq 3 ( |P(x)-P(x')| )^{1-\frac{2t}{w-g}}
 \end{align*}
 This estimate holds for all $n\geq n_0$ (that is, when $|P(x)-P(x')|$ is small), so $f$ is H\"{o}lder continuous with exponent $\alpha=(1-\frac{2t}{w-g})$, and a constant $C$ depending on $n_0$ and $w$.

 Finally, if $f=(f_1,\ldots,f_k)$ where $f_i$ has $\alpha$-H\"{o}lder constant $C_i$, then we extend $f$ to a H\"{o}lder function $\widetilde{f}$ on all of $V^\perp$ as follows: $\widetilde{f}=(\widetilde{f}_1,\ldots,\widetilde{f}_k)$, where
 \[
  \widetilde{f}_i(y) = \inf_{z\in A} \{ f_i(z) + C_i |y-z|^{\alpha} \}.
 \]
 The existence of such an extension is classical: for Lipschitz functions this is the McShane-Whitney extension theorem; the  H\"{o}lder case follows by applying this to a suitable snowflaking of the metric. 
\end{proof}

\subsection{Sharpness of Theorem \ref{thm:holder}}

Theorem \ref{thm:holder} is sharp in a number of ways. One cannot replace ``H\"{o}lder'' by ``Lipschitz'', since the graph of a Lipschitz function is porous, while a set of small box counting dimension needs not be porous. Box-counting dimension cannot be replaced by packing (or Hausdorff) dimension, since a dense countable set cannot be the graph of a H\"{o}lder function, yet has zero packing (and Hausdorff) dimension and Hausdorff dimension can not be changed to packing dimension in measuring the size of exceptions, see Lemma \ref{lem:dense-Gdelta-exceptions}. We will also show that the dimensional threshold on $A$ is also sharp, at least when $k=1$; this depends on the following construction:

\begin{proposition} \label{prop:perc}
For any $t\in[0,d]$, there exists a compact set $A\subset\R^d$ with $\dimh(A)=\dimb(A)=t$ such that the direction set
\[
\dir(A)=\{(x-y)/|x-y|:x\neq y\in A\}
\]
satisfies $\dimh(\dir(A))=\min(2t,d-1)$. Moreover, if $t>(d-1)/2$, then $A$ can be chosen so that $\dir(A)=S^{d-1}$.
\end{proposition}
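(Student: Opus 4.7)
The plan is to take $A$ to be a realisation of Mandelbrot's fractal percolation in $[0,1]^d$: fix an integer $M\ge 2$ and set the retention probability $p=M^{t-d}$; subdivide $[0,1]^d$ into $M^d$ equal sub-cubes, retain each independently with probability $p$, and iterate the procedure inside every retained sub-cube. By classical theorems (Falconer; Mauldin--Williams), the random compact limit $A$ satisfies $\dimh(A)=\dimb(A)=t$ almost surely conditional on the positive-probability event of non-extinction, and carries a natural random measure $\mu$ whose $s$-energy is a.s.\ finite for every $s<t$.

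The upper bound $\dimh(\dir(A))\le\min(2t,d-1)$ is easy: the inclusion $\dir(A)\subset S^{d-1}$ yields $\dimh(\dir(A))\le d-1$, while the map $\pi(x,y)=(x-y)/|x-y|$ is Lipschitz on each set $\{(x,y):|x-y|\ge 2^{-n}\}$, so $(A\times A)\setminus\Delta$ decomposes into a countable union of such sets and $\dimh(\dir(A))\le\dimh(A\times A)\le 2t$. For the lower bound I would run a Riesz energy argument: let $\nu=\pi_{\ast}((\mu\otimes\mu)|_{\{x\ne y\}})$, so $\mathrm{supp}(\nu)\subset\dir(A)$, and use Lemma~\ref{lem:anglegeneral} (together with an elementary converse) to estimate $|\pi(x,y)-\pi(x',y')|$ from below in terms of the pairwise distances among $x,x',y,y'$. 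A direct four-variable integration, using the a.s.\ finite $s$-energy of $\mu$ for $s<t$, shows that the expected $s$-energy of $\nu$ is finite for every $s<\min(2t,d-1)$, so $\dimh(\dir(A))\ge\min(2t,d-1)$ almost surely on non-extinction.

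For the final claim, assume $t>(d-1)/2$, so that $2t>d-1$. Running the same four-variable integral at the exponent $s=d-1$ (the critical value for an $L^{2}$ density on the $(d-1)$-dimensional sphere), the slack $2t-(d-1)>0$ keeps the expected energy finite, and hence $\nu$ is a.s.\ absolutely continuous with respect to the uniform measure $\gamma_{d,1}$ on $S^{d-1}$ with density in $L^{2}$. The main obstacle is now to upgrade ``$L^{2}$ density'' to $\mathrm{supp}(\nu)=S^{d-1}$, since an $L^{2}$ density is a priori allowed to vanish on an open subset of $S^{d-1}$. I would rule this out by combining the rotational invariance of $\gamma_{d,1}$ with the statistical self-similarity of fractal percolation, which together force the density of $\nu$ to have the same distributional behaviour near every $v\in S^{d-1}$, and finishing with a covering argument on the sphere in the spirit of known ``uniform'' projection theorems for percolation. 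Since $\dir(A)\supset\mathrm{supp}(\nu)$, this would give $\dir(A)=S^{d-1}$ almost surely on non-extinction, and picking any such realisation concludes the proof.
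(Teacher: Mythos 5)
Your choice of construction (fractal percolation) and the upper bound $\dimh(\dir(A))\le\min(2t,d-1)$ match the paper, and the overall Riesz-energy strategy for the lower bound is a reasonable one to try. However, there is a genuine gap at the heart of your argument. You propose to bound $|\pi(x,y)-\pi(x',y')|$ from below ``in terms of the pairwise distances among $x,x',y,y'$'' and then invoke only the finite $s$-energy of $\mu$. No such lower bound exists: $\pi(x,y)$ and $\pi(x',y')$ can coincide while all pairwise distances stay bounded away from zero (take four collinear points). More conclusively, the output of your argument would be false for general measures with finite $s$-energy: Lebesgue measure on a line segment has finite $s$-energy for all $s<1$, yet its direction set is a single antipodal pair. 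So the finite $s$-energy of $\mu$ alone cannot suffice; one must control how much mass $\mu\otimes\mu$ places near the affine $(d{+}1)$-planes $W_v=\{(x',y'):y'-x'\in\langle v\rangle\}$, uniformly over $v$, and this is precisely where the independence structure of fractal percolation must enter. For the quantity you actually need, $(\mu\otimes\mu)\bigl(W_v(\delta)\bigr)\lesssim\delta^{d-1-\eps}$ uniformly in $v$, this is a nontrivial almost-sure statement proved via a martingale/large-deviation and chaining argument; the paper imports it from Shmerkin--Suomala (\cite[Theorem 5.2, Corollary 5.8]{ShmerkinSuomala2017}). The student's ``direct four-variable integration'' glosses over exactly this key input.

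Your argument also structurally diverges from the paper's: rather than pushing forward $\mu\otimes\mu$ and bounding an energy, the paper runs a covering/fibre argument. It uses the Shmerkin--Suomala intersection bounds to show that for each ball $B$ of radius $2^{-n}$ in $G(d,1)$, the preimage $\dir^{-1}(B)\cap A_n\times A_n\cap(Q_1\times Q_2)$ meets at most $2^{\delta n}$ dyadic cubes of side $2^{-n}$; since a positive-probability event gives $\dimh\bigl((A\times A)\cap(Q_1\times Q_2)\bigr)=2t$, any cover of $\dir(A)$ must be large, yielding $\dimh(\dir(A))\ge 2t-2\delta$. Finally, your treatment of the case $t>(d-1)/2$ underestimates what is needed: having an $L^2$ density for $\nu$ (which itself still requires the SS-type uniform bound, not just finite $s$-energy of $\mu$) does not give a point in every fibre, and ``rotational invariance plus statistical self-similarity plus a covering argument'' is not a proof. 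The paper simply cites \cite[Corollary 5.9 and Remark 5.10]{ShmerkinSuomala2017}, where the conclusion $\dir(A)=S^{d-1}$ is obtained by proving the relevant random densities are a.s.\ H\"older continuous (in particular continuous and positive on all of $S^{d-1}$), a substantially stronger statement than $L^2$ with a genuinely different proof.
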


The set $A$ can be taken as the $t$-dimensional fractal percolation limit set in $\R^d$. Before proceeding with the proof, we recall its construction. Let $\mathcal{Q}_n^d$ denote the closed dyadic cubes of side length $2^{-n}$ in $\R^d$, and let $\mathcal{Q}^d=\cup_n \mathcal{Q}_n^d$. Given $p\in (0,1)$, we define a decreasing sequence of closed subsets of $[0,1]^d$ as follows. Let $A_0=[0,1]^d$, keep each cube $Q\in\mathcal{Q}_1^d$ with probability $p$ with all the choices independent, and let $A_1$ be the union of the retained cubes. Now suppose $A_n$ has been defined as a union of cubes in $\mathcal{Q}^d_n$. Keep each cube $Q$ in $\mathcal{Q}_{n+1}^d$, $Q\subset A_n$ with probability $p$, with all choices independent of each other, and of previous stages of the construction, and let $A_{n+1}$ be the union of all selected cubes. Finally, we define $A=\cap_n A_n$.

It is well known that if $p=p_t=2^{t-d}$ for some $t\in (0,d]$ then, conditional on $A\neq \varnothing$, the Hausdorff and box counting dimension of $A$ is equal to $t$ (on the other hand, if $p\le 2^{-d}$ then $A$ is empty almost surely).

If we let $\nu_n = p^{-n}\mathbf{1}_{A_n}dx$, then almost surely $\nu_n$ converges weakly to a measure $\nu$ supported on $A$, known as the \emph{natural (fractal percolation) measure}. We refer the reader to \cite{ShmerkinSuomala2017} and references there for further background on fractal percolation.

\begin{proof}[Proof of Proposition \ref{prop:perc}]
Fix $t\in (0,d)$, and let $A$ be the fractal percolation limit set constructed with probability $p=2^{t-d}$ (and hence of box counting dimension $t$).

If $t>(d-1)/2$, then it was shown in \cite[Corollary 5.9]{ShmerkinSuomala2017} (see also \cite[Remark 5.10]{ShmerkinSuomala2017}) that $\dir(A)=S^{d-1}$. Hence we assume that $t\le (d-1)/2$ for the rest of the proof.

Given $v\in G(d,1)$, we denote
\[
W_v= \{ (x,y)\in\R^d\times\R^d: y-x\in\langle v\rangle \}\in G(2d,d+1).
\]
Write $G_{\pi/8}(d,1)$ for the lines in $G(d,1)$ making an angle $\ge \pi/8$ with all coordinate hyperplanes (the value $\pi/8$ is not important, any positive number will do). Let  $\Gamma_0=\{ W_v:v\in G_{\pi/8}(d,1)\}$. Further, let $\Gamma$ be the collection of all translations of planes in $\Gamma_0$ which hit the unit cube (this is a family of \emph{affine} subspaces of $\R^{2d}$).

It easy to check that $\Gamma$ satisfies the assumptions of \cite[Theorem 5.2 and Corollary 5.8]{ShmerkinSuomala2017} (we work with $G_{\pi/8}(d,1)$ instead of $G(d,1)$ to ensure transversality with respect to coordinate hyperplanes). Note that, in our case, $m=2$ and $k=d+1$. Fix two disjoint dyadic cubes $Q_1,Q_2$ of the same level (the level itself is not important), such that any line joining a point of $Q_1$ with a point of $Q_2$ makes an angle $\ge \pi/8$ with the coordinate hyperplanes. Further, let $\gamma>2(d-t)-(d-1)=d+1-2t$. By \cite[Corollary 5.8]{ShmerkinSuomala2017} applied to $U=Q_1\times Q_2$, and the definition of the natural measure, almost surely
\begin{equation} \label{eq:application-SS}
\sup_{n\in \N, W\in\Gamma} 2^{-n\gamma} 2^{2n(d-t)}\mathcal{H}^{d+1}\left( \left(A_n\times A_n\right)\cap (Q_1\times Q_2) \cap W \right) <\infty
\end{equation}
Write $A'_n=(A_n\times A_n)\cap (Q_1\times Q_2)$ for simplicity. By Fubini's Theorem and \eqref{eq:application-SS}, there is a (random) constant $K>0$ such that
\begin{align*}
\mathcal{H}^{2d}\left( A'_n\cap W(2^{-n})\right) &\le  \int_{B(0, 2^{-n})\subset W^\perp} \mathcal{H}^{d+1}(A'_n\cap (W+u)) \, du \\
&\le 2^{-n(d-1)}\, K 2^{n(\gamma -2(d-t))}\\
&=  K 2^{-2nd} 2^{n(\gamma-(d+1-2t))}   =: K 2^{-2nd} 2^{\delta n}
\end{align*}
for any $W\in \Gamma_0$ (here $W(\eps)$ denotes the $\eps$-neighborhood of $W$). Note that $\delta$ can be made arbitrarily small, and that $K$ depends on $\gamma$ (hence on $\delta$). Since any $Q\in\mathcal{Q}_n^{2d}$ that intersects $W$ is contained in $W(\sqrt{2d}2^{-n})$, comparing volumes we deduce that
\[
\#\{ Q\in\mathcal{Q}_n^{2d}: Q\cap W_v\neq\varnothing\} \le  K'_\delta \, 2^{\delta n}
\]
for every $v\in G_{\pi/8}(d,1)$ and some new random constant $K'_\delta>0$. On the other hand, since $Q_1$ and $Q_2$ are separated, if $v,v'\in G(d,1)$ are at distance $\le 2^{-n}$, then
\[
W_{v'}\cap (Q_1\times Q_2)\subset (W_v\cap ( Q_1\times Q_2) )(C 2^{-n}),
\]
where $C>0$ is an absolute constant. Let $\dir(x,y)\in G(d,1)$ be the direction determined by $x\neq y\in \R^d$.  Combining the last two displayed equations, we deduce that for any $2^{-n}$-ball $B$ in $G(d,1)$ whose centre lies in $G_{\pi/8}(d,1)$,
\begin{equation} \label{eq:fiber-cover}
\#\{ Q\in\mathcal{Q}_n^{2d}: Q\subset Q_1\times Q_2, \dir^{-1}(B)\cap Q\neq\varnothing \} \le K''_\delta 2^{\delta n}
\end{equation}
for some new constant $K''_\delta$.

Now, conditioned on $Q_1$ and $Q_2$ being chosen, $A':=(A\times A)\cap (Q_1\times Q_2)$ has (up to scaling and translation) the distribution of the product of two independent copies of fractal percolation with the same parameter $p$. Hence, there is a positive probability that $A'$ has Hausdorff (and box) dimension $2t$. Fix a realization such that this happens, and such that \eqref{eq:fiber-cover} holds for all $\delta>0$ (this is possible since for fixed $\delta$ this is an almost sure event, and we only need to consider a sequence $\delta_j\downarrow 0$).

We can now finish the proof. Define $\Delta\colon \R^d\times \R^d \to G(d,1)$ by $\Delta(x,y)=(x-y)/|x-y|$, and note that $\dir(A)=\Delta( A\times A\setminus\{x=y\} )$. Since $\Delta$ is locally Lipschitz outside of the diagonal $\{x=y\}$, and packing dimension behaves well in products \cite[Theorem 8.10]{Mattila1995}, we know that
\[
\dimh(\dir(A)) \le \dimp(\dir(A)) \le \dimp(A\times A) \le 2\dimp(A)=2t,
\]
so the task is to establish the lower bound.

Let $\{B_j\}$ be a cover of $\Delta(A')$, where $B_j$ is a ball of radius $2^{-n_j}$, with all $n_j$ sufficiently large. Since $Q_1\times Q_2\subset\dir^{-1}(G_{\pi/8}(d,1))$ by our choice of $Q_1, Q_2$, there is a cover of $A'$ consisting, for each $j$, of $K'''_\delta 2^{\delta n_j}$ balls of radius $2^{-n_j}$. Hence
\[
\sum_j K'''_\delta 2^{\delta n_j} 2^{(2t-\delta) n_j} \ge 1 .
\]
This, however, shows that $\Delta(A')\subset\dir(A)$ has Hausdorff dimension $\ge 2t-2\delta$ which, letting $\delta\to 0$, completes the proof.
\end{proof}

\begin{remark}
We believe that fractal percolation should witness the sharpness of Theorem  \ref{thm:holder} for all values of $k$, but the proof given above strongly uses that $k=1$. We note, however, that it follows directly from the methods from \cite{ShmerkinSuomala2017} that if $t>(d-k)/2$, then the $t$-dimensional fractal percolation set $A$ has the property that for every $V\in G(d,k)$ there are points $x,y\in A$ with $y-x\in V$; see \cite[Remark 5.10]{ShmerkinSuomala2017}. Thus in this case all directions are exceptional in Theorem \ref{thm:holder}.
\end{remark}

\begin{corollary} \label{cor:sharpness}
For any $t\in (0,(d-1)/2]$, there exists a compact set $A$ with $\dimh(A)=\dimb(A)=t$ such that
\[
\dimh\{V\in G(d,1): P_V|_A \text{ is not injective } \} = 2t.
\]
\end{corollary}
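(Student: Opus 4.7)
The plan is to derive the corollary directly from Proposition \ref{prop:perc} and Theorem \ref{thm:holder} at $k=1$, connecting the two by a short identification between the non-injectivity set and the direction set.

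For the lower bound, I would fix $t\in(0,(d-1)/2]$ and take $A$ to be the compact set produced by Proposition \ref{prop:perc}, so that $\dimh(A)=\dimb(A)=t$ and $\dimh(\dir(A))=\min(2t,d-1)=2t$. The key elementary observation is that, for a line $V\in G(d,1)$, the orthogonal projection $P_V$ onto $V^\perp$ fails to be injective on $A$ if and only if there exist distinct $x,y\in A$ with $x-y\in V$, equivalently $V=\langle x-y\rangle$. Hence
\[
E:=\{V\in G(d,1): P_V|_A \text{ is not injective}\}
\]
is exactly the image of $\dir(A)\subset S^{d-1}$ under the natural two-to-one quotient $\pi\colon S^{d-1}\to G(d,1)$, $v\mapsto\langle v\rangle$. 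Since $\pi$ is locally bi-Lipschitz with respect to the metric $\roo$ on $G(d,1)$, Hausdorff dimension is preserved, giving $\dimh(E)=\dimh(\dir(A))=2t$.

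For the upper bound, when $t<(d-1)/2$ I would pick some $t'\in(t,(d-1)/2)$ and apply Theorem \ref{thm:holder} with $k=1$ (so that $(k-1)(d-k)=0$): the ``in particular'' clause yields that the set of $V\in G(d,1)$ for which $A$ is not contained in the graph of a H\"older function $V^\perp\to V$ has Hausdorff dimension at most $2t$, and since being covered by such a graph forces $P_V|_A$ to be injective, this set contains $E$. In the borderline case $t=(d-1)/2$ the bound $2t=d-1=\dim G(d,1)$ is automatic. All the substantive work lies in Proposition \ref{prop:perc} and Theorem \ref{thm:holder}; no new technical obstacle arises in this corollary beyond recording that the non-injectivity set and $\dir(A)$ coincide under the covering $\pi$.
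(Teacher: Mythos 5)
Your proof is correct and follows exactly the route the paper intends: the lower bound comes from Proposition \ref{prop:perc} via the (locally bi-Lipschitz) identification of the non-injectivity set with the image of $\dir(A)$ in $G(d,1)$, and the upper bound from the ``in particular'' clause of Theorem \ref{thm:holder} with $k=1$ (trivial when $t=(d-1)/2$). Your explicit handling of the two-to-one quotient $S^{d-1}\to G(d,1)$ and of the borderline case is a welcome bit of bookkeeping that the paper leaves implicit.
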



Finally, we show that the exceptional set of planes can be (and often is) a dense $G_\delta$ subset of $G(d,k)$. Since dense $G_\delta$ subsets of complete metric spaces have full packing dimension, this also shows that in Theorem \ref{thm:holder} one cannot hope to measure the dimension of the exceptional set by packing dimension.
\begin{lemma} \label{lem:dense-Gdelta-exceptions}
Let $0<k<d$. If the direction set of $A\subset\R^d$ is dense in $\R^d$, then
\[
 E = \{ V\in G(d,k): P_V|_A \text{ has no H\"{o}lder inverse }\}
\]
is a dense $G_\delta$ set.

In particular, there are self-similar sets $A$ of all dimensions satisfying this, as well as compact countable sets.
\end{lemma}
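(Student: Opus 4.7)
The plan is to realize $E$ as a dense $G_\delta$ subset of $G(d,k)$ via the Baire category theorem. Assume $A$ bounded, which is harmless since H\"{o}lder invertibility is a local property. For $n,m \in \N$ I define
\[
U_{n,m} = \bigl\{V \in G(d,k) : \exists\, x \neq y \in A \text{ with } |x-y| > m\,|P_V(x)-P_V(y)|^{1/n}\bigr\}.
\]
Continuity of $V \mapsto P_V$ in the Grassmannian metric makes each $U_{n,m}$ open: a witnessing pair $(x,y)$ for $V$ continues to witness membership for every $V'$ sufficiently close to $V$. Moreover, because $A$ is bounded, a H\"{o}lder inverse with exponent $\alpha > 0$ and constant $C$ can always be weakened to any smaller exponent $1/n$ at the cost of an adjusted constant; comparing $|P_V(x)-P_V(y)|^{1/n}$ with $|P_V(x)-P_V(y)|^{\alpha}$ on the bounded set $P_V(A)$ shows that $P_V|_A$ admits a H\"{o}lder inverse if and only if $V \notin U_{n,m}$ for some $n,m$, yielding $E = \bigcap_{n,m} U_{n,m}$, which is $G_\delta$.

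Next I will show each $U_{n,m}$ is dense. Given a nonempty open $W \subset G(d,k)$, I pick $V_0 \in W$ and a unit vector $v \in V_0$. Density of $\dir(A)$ in $S^{d-1}$ provides $x \neq y \in A$ with $(x-y)/|x-y|$ arbitrarily close to $v$; a small perturbation of $V_0$ within $W$, obtained by replacing $v$ in an orthonormal frame of $V_0$ with $(x-y)/|x-y|$ and re-orthogonalizing, produces a $V \in W$ containing the line through $x-y$. For this $V$ one has $P_V(x)=P_V(y)$, so trivially $|x-y| > 0 = m \cdot 0^{1/n}$, placing $V$ inside $U_{n,m} \cap W$. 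The Baire category theorem then concludes that $E$ is a dense $G_\delta$.

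For the final assertion I would produce two explicit examples. A compact countable witness is $A = \{0\} \cup \{v_n / n : n \in \N\}$ for any enumeration $\{v_n\}$ of a dense subset of $S^{d-1}$: this $A$ is compact and countable, and $\dir(A) \supset \{v_n\}$ is dense. For self-similar sets of each prescribed dimension $t \in (0,d)$, I would use an IFS $\{f_i\}_{i=1}^N$ with $f_i(x) = r R x + t_i$, where $R \in SO(d)$ acts minimally on $S^{d-1}$ (e.g.\ an irrational rotation in $\R^2$, or a suitable rotation in higher dimensions), the translations $t_i$ are in general position, $r = N^{-1/t}$, and $N$ is large enough for the open set condition. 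The attractor $A$ then has dimension $t$, and $\dir(A)$ is $R$-invariant, because $f_i$ sends any pair $(x,y) \in A \times A$ to a pair with direction $R\bigl((x-y)/|x-y|\bigr)$; minimality of $R$ on $S^{d-1}$ forces $\dir(A)$ to be dense.

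The main obstacle is the perturbation in the density step, but it reduces to the routine continuity observation that swapping one vector of an orthonormal basis of $V_0$ for a nearby unit vector and re-orthogonalizing yields a continuous map into $G(d,k)$, so the prescribed direction $(x-y)/|x-y|$ can be captured by some plane in any fixed open neighborhood of $V_0$ provided that direction is chosen close enough to $v$.
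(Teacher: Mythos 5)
Your proof is correct and follows essentially the same route as the paper's: write $E$ as a countable intersection of sets that are open by continuity of $V\mapsto P_V$, dense because density of $\dir(A)$ lets you perturb any plane to one containing a direction of $A$ (where injectivity fails outright), and conclude by Baire; your examples are the same as the paper's up to cosmetic changes. The only cosmetic difference is how unboundedness is handled --- you restrict to bounded $A$ and absorb the exponent/constant comparison there, while the paper instead builds the cutoff $\|P_V(x)-P_V(y)\|<1$ into the definition of its open sets $E_N$ --- and both treatments are equally (in)formal on this minor point.
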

\begin{proof}
Define a sequence of open sets
\begin{align*}
 E_N = \{ V\in G(d,k) :\  &\|x-y\| > N \|P_V(x)-P_V(y)\|^{1/N} \text{ for some } x,y\in A \\
     &\text{with } \|P_V(x)-P_V(y)\| < 1 \}.
\end{align*}
If $V\in G(d,k)$ contains a direction determined by $A$, then $V\in E_N$ for all $N$. Since $\dir(A)$ is dense in $S^{d-1}$, we have that each $E_N$ is dense in $G(d,k)$ (given a basis $(w_1,\ldots,w_k)$ for $W\in G(d,k)$, we find $v\in\dir(A)$ arbitrarily close to $w_1$, so that  $(v,w_2,\ldots,w_k)$ spans a $k$-plane in $E_N$, for all $N$, which is close to $W$). It is clear that $E=\cap_{n=1}^{\infty} \cup_{N=n}^{\infty} E_N=\cap_{N=1}^{\infty} E_N$ which is a dense $G_\delta$ set by Baire's theorem.

If $A$ is a self-similar set such that the linear parts of the similarities generate a dense subgroup of $O(d)$, then it is clear that $A$ spans a dense set of directions. Finally, if $\{ e_j\}$ is any dense subset of $S^{d-1}$, then $\{0\}\cup \{ 2^{-j}e_j\}$ is a compact countable set whose direction set is also dense.
\end{proof}
As a corollary, we deduce the following dichotomy:
\begin{corollary}
\label{cor:dichotomy}
For any set $A\subset\R^d$, one of the following alternatives holds:
\begin{enumerate}
\item\label{enu:joko} There is a dense $G_\delta$ subset $E$ of $G(d,1)$ such that for each $v\in E$, there is no H\"{o}lder function $f_v\colon\langle v\rangle^\perp \to \langle v\rangle$ whose graph covers $A$.
\item\label{enu:tai} There is a nonempty open set $U\subset G(d,1)$ such that for each $v\in U$, there is a Lipschitz function $f_v\colon\langle v\rangle^\perp \to \langle v\rangle$ whose graph covers $A$.
\end{enumerate}
\end{corollary}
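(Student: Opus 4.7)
The plan is to branch on whether the direction set $\dir(A) \subset S^{d-1}$ is dense. If it is, Lemma \ref{lem:dense-Gdelta-exceptions} produces the $G_\delta$ set in alternative (\ref{enu:joko}) at no further cost, so all the substantive work lies in verifying alternative (\ref{enu:tai}) in the non-dense case.

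Assuming $\dir(A)$ is not dense, I will select a point $p \in S^{d-1} \setminus \overline{\dir(A)}$ and a small open ball $B \subset S^{d-1}$ around $p$ whose closure remains disjoint from $\overline{\dir(A)}$; by compactness these two closed sets are then separated by some angle $\theta > 0$. Because $\dir(A)$ is symmetric under $u\mapsto -u$, the image $U \subset G(d,1)$ of $B$ under the standard quotient $S^{d-1} \to G(d,1)$ will be nonempty and open, and every $v \in U$ will make angle at least $\theta$ (in $G(d,1)$) with every line determined by $\dir(A)$.

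Fixing such $v \in U$ and distinct $x, y \in A$, I will write $\alpha \in [\theta, \pi/2]$ for the angle between $\langle x - y \rangle$ and $v$, pick a unit vector $v_0$ spanning $v$, and decompose
\[
|P_v(x) - P_v(y)| = |x-y|\sin\alpha \ge \sin(\theta)\, |x-y|, \qquad |(x-y) \cdot v_0| = |x-y|\cos\alpha \le \cos(\theta)\, |x-y|.
\]
This will show that $P_v|_A$ is injective and that the function $f_v \colon P_v(A) \to \langle v\rangle$ whose graph is $A$ is $\cot(\theta)$-Lipschitz. A standard McShane--Whitney extension (applied to the single real coordinate of $\langle v\rangle$) then produces a Lipschitz function on all of $\langle v\rangle^\perp$ whose graph covers $A$, giving alternative (\ref{enu:tai}).

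I do not anticipate any real obstacle. The one point worth flagging is the need to insist that $\overline{B}$ be disjoint from $\overline{\dir(A)}$, rather than merely $B$ disjoint from $\dir(A)$: it is this that supplies a \emph{uniform} angular gap $\theta$, and hence a single Lipschitz constant $\cot(\theta)$ valid for every $v \in U$. That uniformity is precisely what promotes pointwise injectivity of $P_v|_A$ into a Lipschitz graph cover, and is responsible for the openness (rather than mere non-emptiness) of the good set in (\ref{enu:tai}).
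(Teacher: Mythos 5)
Your proposal is correct and follows essentially the same route as the paper: split on whether $\dir(A)$ is dense, invoke Lemma \ref{lem:dense-Gdelta-exceptions} in the dense case, and in the non-dense case use a uniform angular gap between an open set of directions and $\dir(A)$ to get a Lipschitz inverse of the projection (the paper phrases this as a cone condition and cites the proof of \cite[Lemma 15.13]{Mattila1995}, whereas you carry out the elementary trigonometric estimate explicitly). Your remark about needing $\overline{B}$ disjoint from $\overline{\dir(A)}$ to get a single constant on an open set is exactly the right point of care.
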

\begin{proof}
We have seen in Lemma \ref{lem:dense-Gdelta-exceptions} that if $\dir(A)$ is dense in $S^{d-1}$ then the first alternative holds. Suppose then that the direction set is not dense. Then there are a nonempty open set $U\subset S^{d-1}$ and $\e>0$ such $\dist(U,\dir(A))>\eps$. This means that for all points $x\in A$ and $v\in U$, there is a cone in direction $v$, with opening angle $\eps$, which does not contain any other point of $A$. It follows (see e.g. the proof of \cite[Lemma 15.13]{Mattila1995}) that the inverse of the orthogonal projection of $A$ to $\langle v \rangle^\perp$ is Lipschitz, and this can be extended to a Lipschitz function on all of $\langle v \rangle^\perp$.
\end{proof}

\section{Lipschitz coverings and spherical projections}

As discussed in \S 2.2, one cannot hope to find coverings by Lipschitz graphs in the general case. However, in Corollary \ref{cor:dichotomy} we noted that Lipschitz covers do exist when the direction set is not dense. In Proposition \ref{prop:lip_inverse}, we show that for a rich class of non-trivial sets, not only there exists an open set of directions for which there is a Lipschitz covering of the set, but such covering exists in any direction that is not determined by the set.

\subsection{Lipschitz coverings of homogeneous sets}

We recall some definitions that originate in the work \cite{Furstenberg2008}. A set $M\subset\R^d$ is a \emph{miniset} of $A\subset\R^d$ if there is an expanding homothety $h$ of $\R^d$ (that is, $h(x)=rx+t$ for some $r>1$ and $t\in\R^d$) such that $M\subset h(A)$. A compact set $K$ is called a \emph{microset} of $A$ if there is a sequence $(M_i)$ of compact minisets of $A$ converging to $K$ in the Hausdorff metric. Finally, a compact set $A$ is called \emph{homogenous}, if every microset of $A$ is also a miniset of $A$. Examples of homogeneous sets include self-similar sets satisfying the strong separation condition for which the linear parts of the similarities contain no rotations, and closed sets invariant under $(x_1,\ldots,x_d)\mapsto (p x_1\bmod 1,\ldots,p x_d\bmod 1)$.

\begin{proposition}
 \label{prop:lip_inverse}
 Let $A\subset \R^d$ be a homogenous set and $V\in G(d,k)$. If $P_V|_A$ is injective, then it has Lipschitz inverse.
\end{proposition}
\begin{proof}
 Let $E$ be the set of $k$-planes for which $P_V|_A$ is not injective. In other words, $E$ is the set of $V\in G(d,k)$ that contain a direction determined by $A$. That is,
 \[
  E=\{V\in G(d,k) : \dir(A)\cap V\neq\varnothing \}.
 \]
 We are going to show that $E$ is compact. Then any $V\in G(d,k)\setminus E$ has an $\eps$-neighborhood of $k$-planes not in $E$, and the existence of the Lipschitz inverse again follows from the proof of \cite[Lemma 15.13]{Mattila1995}.

 We show first that $\dir(A)$ is compact. By compactness of $S^{d-1}$ it suffices to show that $\dir(A)$ is closed. Let $(e_i)_{i\in\N}\subset \dir(A)$, with $e_i\to e\in S^{d-1}$. For each $e_i$ we find $x_i$ and $y_i$ in $A$ determining the direction $e_i$. If $|x_i-y_i|\ge 1$ for infinitely many $i$, then by compactness there exist $x,y\in A$ determining the direction $e$. We can then assume that $|x_i-y_i|<1$ for all $i$. Consider the expanding homotheties $h_i(z)=(z-x_i)/|x_i-y_i|$ and note that $h_i(y_i)=e_i$ and $h_i(x_i)=0$. Thus $\{0,e_i\}$ is a miniset of $A$, and further, $\{0,e\}$ is a microset of $A$, by the convergence $e_i\to e$. Since every microset is also a miniset by assumption, there is a homothety $h$ such that $\{0,e\}\subset h(A)$,  thus $e\in \dir(A)$.

 Again by compactness of $G(d,k)$, it suffices to show that $E$ is closed. Let $(V_i)\subset E$, with $V_i \to V$. For each $V_i$, find $v_i\in \dir(A)\cap V_i$. By the compactness of $\dir(A)$, we can assume that $v_i$ converges to some $v\in \dir(A)$. On the other hand, $v\in V$ by the convergence $V_i\to V$. Thus $V\in E$.
\end{proof}
For homogenous sets, we can now improve Theorem \ref{thm:holder} as follows:
\begin{corollary}
 \label{cor:lipgraph}
 For a  homogenous set $A$ with $\udimb A < t < \frac{d-k}{2}$, there is an exceptional set $E\subset G(d,k)$, with $\dimh E \leq (k-1)(d-k)+ 2t$, so that for all $V\in G(d,k)\setminus E$, there is a Lipschitz function $f_V\colon V^\perp \to V$ whose graph covers $A$.
\end{corollary}
\begin{proof}
 Setting $E$ as in the proof of Proposition \ref{prop:lip_inverse}, it follows directly from Theorem \ref{thm:holder} that $\dimh E \leq  (k-1)(d-k)+ 2t$. By Proposition \ref{prop:lip_inverse} we can choose $f_V$ to be the extension of the inverse of $P_V|_A$.
\end{proof}

\begin{remark}
 \label{rem:lip_is_bets}
  In general, Lipschitz can not be replaced by $C^1$. Take for example a self-homothetic set $A$ in $\R^d$ satisfying strong separation condition and assume that $A$ is not contained in any (affine) hyperplane. Assume then that $A$ can be covered by a graph of a  $C^1$ function $f\colon V^\perp \to V$, where $V^\perp$ is $k$ dimensional. Fix any point $x\in A$ and consider tangents at $x$. Any tangent of the graph of $f$ is a $k$-plane, but any tangent of $A$ at $x$ is in general position, since it is essentially a copy of $A$. This is a contradiction, since any tangent of $A$ at $x$ should be covered by some tangent of the graph of $f$ at $x$.

  The fact that Lipschitz can not be replaced by $C^1$ also follows directly from \cite[Theorem 3.1]{Mattila1982}.
\end{remark}

\subsection{Spherical projections and H\"{o}lder coverings in polar coordinates}

Let us now consider a variant of the problem where orthogonal projections are replaced by spherical projections $P_h(x):=(h-x)/|h-x|$, for $h\in\R^d$, $x\in\R^d\setminus\{h\}$. In this context, we consider coverings of $A$ by graphs by writing $\R^d\setminus\{h\}$ in polar coordinates centered at $h$ (that is, we identify the point $h+t v$, where $v\in S^{d-1}$, with the pair $(v,t)\in S^{d-1}\times (0,\infty)$). The results are similar to those in the case $k=1$ in \S2, but the dimension estimates differ by an additional constant one. In the viewpoint of our proof, the reason is that if $P_h(a)=P_h(b)$, then $P_{h'}|_A$ is not injective for any $h'$ in the line determined by $a$ and $b$. So a pair of points forbids a one dimensional family, the line through $a$ and $b$, in a $d$ dimensional space $\R^d$, instead of a single point in a $d-1$ dimensional space $S^{d-1}=G(d,1)$.

\begin{theorem}
 \label{thm:visibility}
 Let $A\subset\mathbb{R}^d$ be a bounded set such that $\udimb(A)<t< (d-1)/2$ and let $2t < w < d-1$. Then the set of points $h\in \R^d\setminus A$ for which the set $A$ is not contained in the graph of a H\"older function $f_h:S^{d-1}\to (0,\infty)$ of exponent $1-\frac{2t}{w}$ has Hausdorff dimension at most $w+1$.
\end{theorem}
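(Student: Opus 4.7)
The plan is to mirror the strategy of Theorem \ref{thm:holder}, replacing the parameter space $G(d,k)$ by $\R^d\setminus A$, and gaining one extra dimension in the exceptional set because, for a fixed pair of source balls, the set of view points that see them as aligned is a one-dimensional line rather than a single direction. Since $A$ is bounded, I would fix a ball $\Omega\supset A$ once and for all and exhaust $\R^d$ by countably many such $\Omega$, so that it suffices to bound the exceptional set inside $\Omega$.

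Given $s<t$ with $\udimb(A)<s$, take covers $\mathcal{B}_n$ of $A$ by balls of radius $2^{-n}$ with $|\mathcal{B}_n|\le C\,2^{sn}$, set $\alpha=1-2t/w$, and keep only the pairs $\mathcal{C}_n=\{(2B,2B'):(B,B')\in\mathcal{B}_n^2,\ \dist(B,B')\ge 2\cdot 2^{-\alpha n}\}$ whose centres are well separated, exactly as in the proof of Theorem \ref{thm:holder}. For each pair let $\ell(B,B')$ denote the line through the centres; the analogue of $H_{B,B'}(\delta)$ is then
\[
 \widetilde H_{B,B'} \;=\;\{h\in\Omega : \dist(h,\ell(B,B'))\le C_\Omega\, 2^{-(1-\alpha)n}\},
\]
a $\delta$-neighbourhood of a segment of length $O(1)$ with $\delta=C_\Omega\,2^{-(1-\alpha)n}$. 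A planar computation in the spirit of Lemma \ref{lem:anglegeneral} will show that for $h\in\Omega\setminus\widetilde H_{B,B'}$ and $n$ large enough that $h\notin 2B\cup 2B'$ (automatic once $2^{-n}<\dist(h,A)/2$, since $h\notin A$), the angle between the directions from $h$ to $c_B$ and $c_{B'}$ exceeds the sum of the angular radii of $\pi_h(2B)$ and $\pi_h(2B')$, and hence the spherical images are disjoint and in fact separated by an angular distance $\gtrsim 2^{-n}/\diam(\Omega)$.

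Covering $\widetilde H_{B,B'}$ by $O(\delta^{-1})$ balls of diameter $O(\delta)$ yields
\[
 \HH_\infty^{w+1}(\widetilde H_{B,B'})\le O(\delta^w)=O(2^{-2tn}),
\]
using $(1-\alpha)w=2t$. Setting $\widetilde M_n:=\bigcup_{(B,B')\in\mathcal{C}_n}\widetilde H_{B,B'}$ and summing over the $|\mathcal{C}_n|\le C^2 2^{2sn}$ pairs gives $\HH_\infty^{w+1}(\widetilde M_n)\le O(2^{2(s-t)n})$, which is summable. Hence the set of $h\in\Omega$ belonging to infinitely many $\widetilde M_n$ has $\HH_\infty^{w+1}$-measure zero, and therefore Hausdorff dimension at most $w+1$. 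For any other $h\in\Omega\setminus A$, the chaining argument at the end of the proof of Theorem \ref{thm:holder}, applied now to $\pi_h$, produces a H\"older inverse of exponent $\alpha$ on $\pi_h(A)\subset S^{d-1}$; then $f_h(\theta):=|\pi_h^{-1}(\theta)-h|$ satisfies $|f_h(\theta)-f_h(\theta')|\le|\pi_h^{-1}(\theta)-\pi_h^{-1}(\theta')|\le C|\theta-\theta'|^\alpha$ on $\pi_h(A)$, and the same infimum extension used in Theorem \ref{thm:holder} produces a strictly positive H\"older function $f_h\colon S^{d-1}\to(0,\infty)$ whose graph covers $A$.

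The main technical obstacle is the geometric estimate defining $\widetilde H_{B,B'}$. Unlike in the linear case, the bad region is not a uniform tube around $\ell(B,B')$: between the two centres it has width only $\sim 2^{-n}$, while outside the segment it widens linearly, of order $2^{-n}\cdot(\mathrm{distance\ from\ segment})/\dist(B,B')$. The clean uniform bound $\delta\sim 2^{-n}\diam(\Omega)/\dist(B,B')$ is valid only after restricting to a bounded region $\Omega$, which is precisely the reason why $\R^d$ must be covered by countably many such regions in order to transfer the content estimate into the desired global Hausdorff dimension bound $w+1$.
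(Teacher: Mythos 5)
Your argument is correct and follows essentially the same route as the paper's (sketched) proof: you replace the angular neighbourhoods $H_{B,B'}(\delta)$ by tubes around $\ell(B,B')$ intersected with a bounded region, bound their $(w+1)$-dimensional Hausdorff content by $O(\delta^w)$ by chopping the tube into $O(\delta^{-1})$ pieces, sum over pairs exactly as in Theorem \ref{thm:holder}, and run the same chaining argument for the spherical projection $P_h$, finally exhausting $\R^d$ by bounded regions. The extra details you supply (the widening of the true bad region away from the segment, the radial coordinate being $1$-Lipschitz) are consistent with, and slightly more explicit than, the paper's sketch.
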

\begin{proof}
 The proof mimics that of Theorem \ref{thm:holder} so we only give a sketch. Fix $S>0$. To begin, observe that if $B, B'$ are balls of radius $r$, separated by a distance $R\ge r$, then the set of points $h \in B(0,S)$ which lie on a line through $B$ and $B'$ are contained in $T_{B,B'}(C_S r/R) \cap B(0,S)$, were
 \[
  T_{B,B'}(\delta) = \{ x\in\R^d : \dist(x,\ell(B,B'))\leq \delta \}
 \]
 is a tube, and $C_S$ is a constant depending only on $S$. Further, by dividing the tube into pieces of length $\delta$, it is easy to see that the $(w+1)$-dimensional Hausdorff content of $T_{B,B'}(\delta)\cap B(0,S)$ is at most $C'_{S}\delta^{w}$, where again $C'_S$ depends only on $S$. With this in hand, the proof continues as the case $k=1$ of proof of Theorem \ref{thm:holder}, with $M_n$ equal to the set points that lie on a line through a pair of balls in $\CC_n$, using the spherical projections $P_h$ instead of the projections to $V^\perp$, and considering the $(w+1)$-dimensional Hausdorff content of the points that are infinitely often in $M_n$. Since the result holds for every $S>0$, it holds also in all of $\R^d$.
\end{proof}
\begin{corollary}
 \label{cor:visibility}
 Let $A\subset\mathbb{R}^d$ be a bounded set such that $\udimb(A) = t < (d-1)/2$. Then the set of points $h\in \R^d\setminus A$ for which the spherical projection $P_h$ restricted to $A$ is not injective, has Hausdorff dimension at most $2t+1$.
\end{corollary}

\section{H\"{o}lder graphs and doubling measures}
A Borel measure $\mu$ on a metric space $X$ is said to be doubling, if there is a constant $C>1$ so that $\mu( B(x,2r) ) \leq C \mu(B(x,r))$ for any $x\in X$ and $r>0$ (for us, $X$ will always be a Euclidean space $\R^d$). A set is said to be \emph{thin} if it is of zero measure for all doubling measures of the ambient space. For example, a simple density point argument shows that upper porous sets are thin. We refer the reader to \cite{OjalaRajalaSuomala2012, WangWenWen2013} for further discussion and examples of thin sets. If a set is not thin, then we may say that it has positive doubling measure without specifying the measure.

As mentioned in the introduction, we can answer the question of whether all H\"older graphs are thin in the negative. By Theorem \ref{thm:holder} or \cite[Theorem 3.1]{HuntKaloshin1999} all that is needed is the existence of a set of small upper box dimension and positive doubling measure. The existence follows from \cite{KaenmakiRajalaSuomala2012}, as explained in Remark \ref{rem:tapio} below, but we prefer to exhibit a concrete self-similar example arising in \cite{GarnettKillipSchul2010} (in fact, this type of construction goes back even further to \cite{Wu1998}). For the reader's convenience, we briefly revise the construction. For $\delta>0$ and a probability vector $p=(\delta,1-2\delta,\delta)$, let $\mu$ be the associated ternary Bernoulli (self-similar) measure on the unit interval, extended $1$-periodically to the real line. Since the weights on the sides are equal, the resulting measure is doubling. Given a dimension $d\ge 2$, let $\nu$ be the $d$-fold product $\mu\times\cdots\times\mu$, which is again doubling. See \cite[\S 2.1]{GarnettKillipSchul2010} for the short proofs of these facts.

We use the convention that the ternary expansion of a number is the lexicographically smallest one, if there are two. Fix $n_1\in\N$ and choose $\delta$ so that $k_1:= 3\delta n_1 \in\N$, and let $k_\ell=\ell\cdot k_1$ for all $\ell\in\N$. Finally, set $S_L = \sum_{\ell=1}^L n_1\cdot \ell =n_1 \cdot L(L+1)/2$.

Now define $K$ to be the set of those points in $[0,1]$ whose ternary expansion contains at most $k_L$ zeros or twos between the positions $S_{L-1}+1$ and $S_L$ (we refer to $S_L$ as construction levels). In other words, let $x_j$ denote the $j$:th digit in the ternary expansion of $x$ and set
\[
 K := \left\{ x\in [0,1] :  x_j\in\{0,2\}\text{ for at most $k_L$ values of $j$ with $S_{L-1}< j \leq S_L$} \right\}.
\]
Note that the constructions of $K, \mu$ and $\nu$ depend on the parameter $\delta$.
\begin{lemma}
\label{lem:GKSdimension}
 The upper box dimension of $K$ can be made arbitrarily small
\end{lemma}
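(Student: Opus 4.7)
My approach is to bound, for each large $L$, the number of ternary intervals of length $3^{-S_L}$ needed to cover $K$, and then translate this into a bound on the upper box dimension that tends to $0$ as $\delta\to 0$.

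The plan is as follows. First, note that $K$ at scale $3^{-S_L}$ is covered by exactly the set of basic ternary intervals of length $3^{-S_L}$ whose defining digit strings $(x_1,\dots,x_{S_L})$ are \emph{allowed}, meaning that, for each $\ell=1,\dots,L$, the block $(x_{S_{\ell-1}+1},\dots,x_{S_\ell})$ of length $n_1\ell$ has at most $k_\ell = 3\delta n_1 \ell$ entries equal to $0$ or $2$. Since the blocks are independent, the number of allowed strings is
\[
 N_L \;=\; \prod_{\ell=1}^{L} M_\ell, \qquad M_\ell \;=\; \sum_{j=0}^{k_\ell} \binom{n_1\ell}{j} 2^{j},
\]
where the $2^j$ counts the choice of $0$ or $2$ at each of the $j$ "off" positions, and the remaining positions must be $1$.

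Next, provided $\delta<\tfrac16$, so that $k_\ell/(n_1\ell) = 3\delta < \tfrac12$, I apply the standard entropy estimate $\sum_{j\le k}\binom{M}{j}\le 2^{H(k/M)M}$ (with $H$ the binary entropy) together with $2^j\le 2^{k_\ell}$ to obtain
\[
 M_\ell \;\le\; (k_\ell+1)\cdot 2^{H(3\delta)\cdot n_1\ell}\cdot 2^{3\delta\cdot n_1\ell}
       \;\le\; C n_1\ell\cdot 2^{(H(3\delta)+3\delta)\, n_1\ell}.
\]
Taking the product over $\ell\le L$ and using $\sum_{\ell=1}^L n_1\ell = S_L$,
\[
 N_L \;\le\; (Cn_1 L)^{L}\cdot 2^{(H(3\delta)+3\delta)\, S_L}.
\]
Since $S_L \sim n_1 L^2/2$, the prefactor satisfies $\log((Cn_1L)^L)/S_L \to 0$, giving
\[
 \limsup_{L\to\infty}\frac{\log_3 N_L}{S_L} \;\le\; \frac{H(3\delta)+3\delta}{\log 2\cdot \log_2 3^{-1}} \;=\; \frac{(H(3\delta)+3\delta)\log 2}{\log 3},
\]
which can be made arbitrarily small by choosing $\delta$ sufficiently small (with $n_1$ chosen so that $3\delta n_1\in\mathbb{N}$).

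Finally, I handle intermediate scales: for $r\in[3^{-S_{L+1}},3^{-S_L}]$, any $3^{-S_L}$-cover also gives an $r$-cover of the same cardinality up to a constant (one enlarges each interval), so the covering number at scale $r$ is at most $O(N_L)$. Therefore
\[
 \frac{\log N(r)}{-\log r} \;\le\; \frac{\log N_L + O(1)}{S_L \log 3},
\]
and the quadratic growth of $S_L$ absorbs the linear growth $S_{L+1}-S_L = n_1(L+1)$, so no issue arises at intermediate scales and $\udimb(K)$ is governed by the bound above. The only genuine work is the counting and entropy estimate in the first two steps; the intermediate-scale handling and the choice of $\delta$ to make the final constant arbitrarily small are then routine.
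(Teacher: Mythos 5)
Your proof is essentially correct and reaches the paper's conclusion by a slightly more self-contained route. Where the paper imports \cite[Equation (2.6)]{GarnettKillipSchul2010} as a black box for the bound $N_L\le \exp\{(k_1+\cdots+k_L)(1+\log\delta^{-1})\}$ at the construction levels $3^{-S_L}$, you re-derive an equivalent estimate directly: you factor the count over blocks, $N_L=\prod_{\ell=1}^L M_\ell$ with $M_\ell=\sum_{j\le k_\ell}\binom{n_1\ell}{j}2^j$, and apply the entropy bound $\sum_{j\le k}\binom{M}{j}\le 2^{H(k/M)M}$ (legitimate for $3\delta<\tfrac12$). Your resulting rate $(H(3\delta)+3\delta)\log 2/\log 3$ is comparable to the paper's $\tfrac{3}{\log 3}(\delta+\delta\log\delta^{-1})$; both are $\approx 3\delta\log\delta^{-1}$ for small $\delta$ and tend to $0$. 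The interpolation to intermediate scales relies, as in the paper, on $S_L$ growing quadratically in $L$ while the gaps $S_{L+1}-S_L=n_1(L+1)$ grow only linearly.

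There is a slip in that final interpolation step, though. For $r\in[3^{-S_{L+1}},3^{-S_L}]$ you have $r\le 3^{-S_L}$, so a cover of $K$ by intervals of length $3^{-S_L}$ is \emph{not} a cover by sets of diameter at most $r$; one cannot ``enlarge'' the intervals, one must subdivide them. Each of the $N_L$ intervals splits into at most $3^{S_{L+1}-S_L}=3^{n_1(L+1)}$ sub-intervals of length $r$, so the correct bound is $N(r)\le N_L\cdot 3^{n_1(L+1)}$, and the numerator in your displayed estimate should read $\log N_L+n_1(L+1)\log 3$, not $\log N_L+O(1)$. The phrases ``of the same cardinality up to a constant'' and ``at most $O(N_L)$'' are incorrect as written. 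That said, your very next sentence already names the cure --- the quadratic growth of $S_L$ absorbs this linear-in-$L$ term --- so once the intermediate bound is corrected the argument closes, and the limit is unchanged.
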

\begin{proof}
First of all, in the calculation of $\udimb$ it is enough to consider ternary intervals. At the construction levels $S_L$, we have the natural cover of $K$ by the ternary construction, and the number of intervals is at most $\exp\{ (k_1+\dots+k_L) (1+\log \delta^{-1}) \}$, as it follows from \cite[Equation (2.6)]{GarnettKillipSchul2010}. For $m\in\N$, set $j_m$ to be the difference from $m$ to the previous level of the construction. In other words, let $L_m$ be the largest integer so that $m-S_{L_m} =:j_m \geq 0$. Note that $j_m \leq (L_m + 1) n_1$. Also, $S_L\geq \tfrac12 L^2 n_1$ for large values of $L$ and so $m \geq\tfrac12 L_m^2 n_1$ for large values of $m$. Thus, letting $N(K,3^{-j})$ be the number of ternary intervals of side-length $3^{-j}$ that touch $K$, we have that
 \begin{align*}
  \frac{ \log N(K,3^{-m}) }{ \log 3^m }
  &\leq \frac{ \log [N(K,3^{-L_m}) 3^{j_m}] }{ m \log 3 }\\
  &\leq \frac{ \log[ \exp\{  S_{L_m} 3\delta (1+\log \delta^{-1}) \}] + j_m \log 3 }{ m \log 3 } \\
  &\leq \frac{3}{\log 3} \frac{S_{L_m}}{m} (\delta+\delta\log \delta^{-1}) + \frac{j_m}{ m } \\
  &\leq \frac{3}{\log 3} (\delta+\delta\log \delta^{-1}) + \frac{2(L_m + 1) n_1}{L_m^2 n_1}\\
  &\to  \frac{3}{\log 3} (\delta+\delta\log \delta^{-1})
 \end{align*}
 as $m\to\infty$. Since $(\delta+\delta\log \delta^{-1}) \to 0$ as $\delta\to 0$, for any $\eps$ we can choose $\delta$ so that $\udimb(K)\leq\eps$.
\end{proof}
It is (implicitly) shown in \cite{GarnettKillipSchul2010} that $\nu(K^d)>0$ (see, in particular, \cite[Equation (2.5)]{GarnettKillipSchul2010}). In fact, the measure can be made arbitrarily close to one by choosing $n_1$ large enough. Since $\udimb(K^d)\le d \udimb(K)$, we get the following corollary of \cite[Theorem 3.1]{HuntKaloshin1999} (and Theorem \ref{thm:holder}):

\begin{corollary}
 \label{cor:holderdoubling}
 For any $d\in\N_{\ge 2}$ and $\gamma<1$, there are a $\gamma$-H\"older function $f:\R^{d-1}\to \R$ and a self-similar doubling measure $\nu$ on $\R^d$, so that the graph of $f$ has positive measure with respect to $\nu$.
\end{corollary}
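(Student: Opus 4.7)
The plan is to combine the explicit construction $(K,\mu,\nu)$ just described with Theorem \ref{thm:holder} in the case $k=1$; the paragraphs preceding the statement already indicate that this should work, and the main task is to line up parameters so the Hölder exponent exceeds the prescribed $\gamma$.

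First, I would fix $d\ge 2$ and $\gamma<1$ and then pick $w<d-1$ close to $d-1$ together with $t>0$ small enough that $2t<w$ and $1-2t/w>\gamma$; this is possible since $1-2t/w\to 1$ as $t\to 0$ with $w$ bounded away from $0$. Lemma \ref{lem:GKSdimension} then lets me choose the construction parameter $\delta$ small enough that $\udimb(K)<t/d$, so the standard product bound gives $\udimb(A)\le d\,\udimb(K)<t<(d-1)/2$ for $A=K^d$. The measure $\nu=\mu\times\cdots\times\mu$ is self-similar and doubling, and the discussion preceding Lemma \ref{lem:GKSdimension} guarantees $\nu(A)>0$.

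Next I would apply Theorem \ref{thm:holder} with $k=1$, in which case $(k-1)(d-k)=0$. It produces an exceptional set $E\subset G(d,1)$ with $\dimh E\le w<d-1=\dim G(d,1)$, hence $E\neq G(d,1)$. Any $V\in G(d,1)\setminus E$ yields a Hölder function $f_V\colon V^\perp\to V$ of exponent $1-2t/w>\gamma$ whose graph contains $A$. To convert this into a function $\R^{d-1}\to\R$, I would apply an orthogonal change of coordinates $T\in O(d)$ sending $V$ to $\R e_d$ and $V^\perp$ to $\R^{d-1}\times\{0\}$, and set $\widetilde\nu=T_*\nu$ and $\widetilde A=T(A)$. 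Doubling is invariant under bi-Lipschitz maps, so $\widetilde\nu$ is doubling, and conjugating the similarities generating $\nu$ by $T$ exhibits $\widetilde\nu$ as self-similar with the same weights; the set $\widetilde A$ has $\widetilde\nu$-mass $\nu(A)>0$ and lies in the graph of the Hölder function $f\colon\R^{d-1}\to\R$ corresponding to $f_V$ under $T$, with the same exponent $>\gamma$. The pair $(f,\widetilde\nu)$ is then what is required.

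There is no real obstacle in the argument. The only point of care is checking that the two crucial properties of $\nu$, doubling and self-similarity, survive the orthogonal rotation used to align $V$ with a coordinate axis; both are essentially automatic, but must not be overlooked. Everything else is bookkeeping of parameters so that a single application of Theorem \ref{thm:holder} to $A=K^d$ returns a graph of Hölder exponent strictly above $\gamma$.
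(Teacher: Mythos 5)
Your proposal is correct and follows essentially the same route as the paper: apply Theorem \ref{thm:holder} (equivalently, the Hunt--Kaloshin theorem) with $k=1$ to $A=K^d$, using Lemma \ref{lem:GKSdimension} and the bound $\udimb(K^d)\le d\,\udimb(K)$ to make the box dimension small enough that the exceptional set in $G(d,1)$ is proper and the exponent $1-2t/w$ exceeds $\gamma$. Your extra care in checking that doubling and self-similarity survive the orthogonal change of coordinates is a detail the paper leaves implicit, but it is not a different argument.
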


\begin{remark}
\label{rem:tapio}
 The existence of sets with small upper box dimension and positive doubling measure also follows from \cite{KaenmakiRajalaSuomala2012}, where it is shown that in any complete doubling metric space there are doubling measures giving full measure to a set of arbitrarily small packing dimension. In particular, in $\R^d$, for any $\eps>0$, there is a doubling measure $\mu$ and a bounded set $A\subset\R^d$ of positive measure, so that $\dimp (A) \leq \eps$. Since packing dimension can be defined in terms of upper box dimension, see for example \cite[Section 5.9]{Mattila1995}, one can choose $B\subset A$ so that $\mu(B) > 0$ and $\udimb (B) < 2\eps$. We thank T. Rajala for this remark.
\end{remark}

\bibliographystyle{abbrv}
\bibliography{References}

\end{document}